\begin{document}
\title[On reaction-diffusion systems.]{Global existence for reaction
diffusion systems with strict balance Law and nonlinearities with non
constant-sign and unlimited polynomial growth .}
\author{Said Kouachi}
\address{Department of Mathematics and Informatics, College of Science and
Technology, University of Abbes Laghrour Khenchela. Algeria.}
\email{kouachi@univ-khenchela.dz}
\thanks{This work was supported by the College of Science and Technology,
University of Abbes Laghrour Khenchela, project code: C00L03UN400120220001.}
\date{December 7, 2022}
\subjclass{Primary 35K45, 35K57; Secondary 35K45}
\keywords{Reaction-diffusion systems, Global existence, Lyapunov functionals.%
}
\maketitle

\begin{abstract}
The purpose of this paper is to prove global existence of solutions for
general systems of reaction diffusion equations with nonlinearities for
which only two main proprieties hold: Quasi-Positivity and balance law but
with two difficulties: they change sign and with unlimited polynomial
growth. We overcome the first difficulty by fixing the reaction sign after
some time and the second one by using a judicious polynomial Lyapunov
functional.
\end{abstract}

\section{\textbf{Introduction}}

We consider the following reaction-diffusion system 
\begin{equation}
\left\{ 
\begin{array}{l}
\dfrac{\partial u}{\partial t}-a\Delta u=f(t,x,u,v)=-\varphi (t,x,u,v)\text{%
\qquad \qquad in }\mathbb{R}^{+}\times \Omega , \\ 
\\ 
\dfrac{\partial v}{\partial t}-b\Delta v=g(t,x,u,v)=\varphi (t,x,u,v)\text{%
\qquad \qquad in }\mathbb{R}^{+}\times \Omega ,%
\end{array}%
\right.  \label{1.1}
\end{equation}%
with the boundary conditions

\begin{equation}
\frac{\partial u}{\partial \eta }=\frac{\partial v}{\partial \eta }=0\text{%
\qquad \qquad on }\mathbb{R}^{+}\times \partial \Omega ,  \label{1.2}
\end{equation}%
and the initial data

\begin{equation}
u(0,x)=u_{0}(x),\qquad v(0,x)=v_{0}(x)\qquad \text{in}\;\Omega ,  \label{1.3}
\end{equation}%
where $\dfrac{\partial }{\partial \eta }$ denotes the outward normal
derivative on the boundary $\partial \Omega $ of an open bounded domain $%
\Omega \subset \mathbb{R}^{n}$ of class $\mathbb{C}^{1}$ and $a$ and $b$ are
positive constants. The initial data are assumed to be bounded on $\Omega $
and nonnegative. The reaction $\varphi \in C^{1}(\mathbb{R}^{+}\mathbb{%
\times \mathbf{\Omega }\times R}\times \mathbb{R},\mathbb{R})$ isn't
necessarily constant sign with%
\begin{equation}
\ \varphi (t,x,u,0)=\varphi (t,x,0,v)=0,\ \text{for all }u\geq 0,\ v\geq 0,\ 
\text{ }t>0,\ x\in \Omega ,  \label{P}
\end{equation}%
and at most of polynomial growth with respect to the third and fourth
variables:%
\begin{equation}
\text{polynomial-Growth:\ \ }\left\vert \varphi (t,x,s_{1},s_{2})\right\vert
\leq C_{1}+C_{2}\left( s_{1}+s_{2}\right) ^{r},\ \ \ \text{for all }%
s_{1},s_{2}\geq 0,  \label{G}
\end{equation}%
for some $r\geq 1$ and positive constants $C_{1}\ $and $C_{2}$. We are
interested with global existence in time of positive solutions to problem (%
\ref{1.1})-(\ref{1.3}). When the reaction $\varphi $\ is constant sign, then
global existence is immediate and many results have been obtained (see \cite%
{Kou-You} and \cite{Har-You} \cite{Pie-Sch 1993}, \cite{Pie}, ). It is clear
that here, the strict control of mass (called also balance law) is satisfied:%
\begin{equation}
\ \ f(t,x,u,v)+g(t,x,u,v)=0,\ \text{for all }u\geq 0,\ v\geq 0,\ \text{ }%
t>0,\ x\in \Omega ,\ \ \ \ \ \   \label{M}
\end{equation}%
which gives, with homogenous boundary conditions (\ref{1.2}), the total mass
of the solution is invariant.

Since the reaction $\varphi \in C^{1}(\mathbb{R}^{+}\mathbb{\times \mathbf{%
\Omega }\times R}\times \mathbb{R},\mathbb{R})$, then it is locally
Lipschitz and there exists a unique regular solution locally in time in some
maximal interval $\left( 0,T_{\max }\right) $ which satisfies $u,\ v\in
C\left( \left( 0,T\right) ,L^{p}\left( \Omega \right) \right) \cap L^{\infty
}\left( \left( 0,T-\tau \right) \times \Omega \right) $ for all $p>n$ and
all $\tau \in $ $\left( 0,T\right) $ (see for example \cite{Ama 85} and \cite%
{Hen}). Moreover%
\begin{equation}
\text{If }\underset{t\rightarrow T_{\max }}{\lim }\left( \left\Vert
u(t,.)\right\Vert _{L^{\infty }\left( \Omega \right) }+\left\Vert
v(t,.)\right\Vert _{L^{\infty }\left( \Omega \right) }\right) <+\infty ,\ \ 
\text{then }T_{\max }=+\infty .  \label{G.E.}
\end{equation}%
When the reaction is constant sign, the existence in time of a global
solution is trivial : It is the case for example when the reaction $\varphi $%
\ is positive, then by the maximum principle we obtain the uniform
boundedness of the component $u$ and using the strict balance law (\ref{M}),
we deduce the boundedness of $v$ and the global existence in time follows
automatically. Also when the coefficients of diffusion $a$ and $b$ are
equal, global existence is deduced by summing the two equations, using the
positivity of the solutions and applying the maximum principle.

Because of (\ref{G.E.}), this is not so obvious in our case since, in
addition of $a\neq b$, the reaction isn't constant sign and the maximum
principle isn't applicable.

Let us denote by $\overline{t}$ the greats positive time in $\left(
0,T\right) $ such that%
\begin{equation}
\varphi (\overline{t},\overline{x},\overline{u},\overline{v})=0,  \label{1.9}
\end{equation}%
for some $\left( \overline{x},\overline{u},\overline{v}\right) \in \mathbf{%
\Omega }\mathbb{\times R}^{+}\times \mathbb{R}^{+}$. If such $\overline{t}$
doesn't exist, then the reaction $\varphi $ doesn't change sign on $\left(
0,T\right) $ and by the above remarks the global existence becomes.
Otherwise, one of the two following situations is presented:%
\begin{equation}
\text{ }f(t,x,u,v)<0,\text{ for a all }t\in \left( \overline{t},T\right) ,\
x\in \Omega ,\ \ \left( u,v\right) \in \mathbb{R}^{+}\times \mathbb{R}^{+},
\label{1.7.1}
\end{equation}%
or%
\begin{equation}
f(t,x,u,v)>0,\text{ for a all }t\in \left( \overline{t},T\right) ,\ x\in
\Omega ,\ \ \left( u,v\right) \in \mathbb{R}^{+}\times \mathbb{R}^{+}.
\label{1.7.2}
\end{equation}%
This uniform control on the mass (or in mathematical terms on the L1-norm of
the solution) proposes that no blow-up should arise in finite time.
Unfortunately, the situation is not so straightforward. To show how the
situation is difficult, the authors in \cite{Des-Fel-Pie-Vov}, \cite{Fis 15}%
, \cite{CGV19}, \cite{Sou 18}, and others forced the reactions which are at
most quadratic growth to satisfy a Lyapunov structure of LlogL entropy type 
\begin{equation}
\left( \log u\right) f+\left( \log v\right) g\leq 0,\text{ for a all }u>0,\
v>0,  \label{1.8}
\end{equation}%
and obtained only global existence for weak solutions except in some
particular cases.

The reactions terms change sign signifies that none of the equations is nice
in the sense that neither $u$ nor $v$\ is a priori bounded or at least
bounded in some $Lp$-space for $p$ large to apply the well-known
regularizing effect and deduce the global existence of strong solutions in
time for problem (\ref{1.1})-(\ref{1.3}). We should remark that the global
existence of solutions with quadratic nonlinearities was proved by \cite{Sou
18} in the whole space as well as for bounded domains and without using the
control mass condition.

\section{\protect\bigskip \textbf{Notations and preliminary observations}}

It is well-known that to prove global existence of solutions to (\ref{1.1})
(see for example \cite{Hen} and \cite{Har-You}), it suffices to derive a
uniform estimate of $\left\Vert \varphi \right\Vert _{p}$ on $\left[
0,T_{\max }\right[ $ for some $p>n/2$. Here we apply a modified polynomial
Lyapunov functional method analogous to the one used in \cite{Kou Arx 22}
(see also \cite{Kou(ED2001)} \cite{Kou(ED2002)} and \cite{Kou(DPDE)}), to
derive L $^{p}-$bounds of the solution $(u,v)$ of system (\ref{1.1}) and
deduct the global existence in time.

The Lebesgue spaces $\mathbb{L}^{p}(\Omega )$ and $\mathbb{L}^{\infty
}(\Omega )$ and the space $\mathbb{C}\left( \overline{\Omega }\right) $ of
continuous functions on $\Omega $ are respectively endowed with the norms

\begin{equation*}
\left\Vert u\right\Vert _{p}^{p}\text{=}\frac{1}{\left\vert \Omega
\right\vert }\int\limits_{\Omega }\left\vert u(x)\right\vert ^{p}dx,
\end{equation*}%
\begin{equation*}
\left\Vert u\right\Vert _{\infty }\text{=}\underset{x\in \Omega }{max}%
\left\vert u(x)\right\vert .
\end{equation*}%
Since the nonlinear right-hand side of (\ref{1.1}) is continuously
differentiable on $\mathbb{R}^{+}\times \Omega \times \mathbb{R}^{+}\times $ 
$\mathbb{R}^{+}$, then for any initial data in $\mathbb{C}\left( \overline{%
\Omega }\right) $ or $\mathbb{L}^{p}(\Omega ),\;p\in \left( 1,+\infty
\right) $, it is easy to check directly its Lipschitz continuity on bounded
subsets of the domain of a fractional power of the operator 
\begin{equation}
\;\;\left( 
\begin{array}{ll}
-a\Delta & 0 \\ 
0 & -b\Delta%
\end{array}%
\right) .\;\newline
\label{2.2}
\end{equation}%
Under these assumptions, the following local existence result is well-known
(see A. Friedman \cite{Fri}, \cite{Hen}, \cite{Smo} and \cite{Rot}).

\begin{proposition}
The system (1.1)-(1.3) admits a unique, classical solution $(u,v)$ on $%
[0,T_{\max }[\times \Omega $. If $T_{\max }<\infty $ then 
\begin{equation}
\underset{t\nearrow T_{\max }}{\lim }\left\{ \left\Vert u(t,.)\right\Vert
_{\infty }+\left\Vert v(t,.)\right\Vert _{\infty }\right\} =\infty , 
\tag*{(2.3)}
\end{equation}%
where $T_{\max }$ denotes the eventual blowing-up time in $\mathbb{L}%
^{\infty }(\Omega ).$
\end{proposition}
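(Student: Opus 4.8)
The plan is to recast (\ref{1.1})--(\ref{1.3}) as an abstract semilinear parabolic problem and to invoke the classical local theory for analytic semigroups, exactly as in \cite{Fri}, \cite{Hen}, \cite{Smo}, \cite{Rot}. Put $w=(u,v)$ and let $A=\mathrm{diag}(-a\Delta,-b\Delta)$ be the diagonal operator on $X=\mathbb{L}^{p}(\Omega)^{2}$, $p>n$, realized with the homogeneous Neumann boundary conditions (\ref{1.2}); then $A$ is sectorial, $-A$ generates an analytic semigroup $(e^{-tA})_{t\geq 0}$, and the fractional power spaces $X^{\alpha}=D(A^{\alpha})$ are available. The system then reads $w'(t)+Aw(t)=F(t,w(t))$, $w(0)=(u_{0},v_{0})$, with $F(t,w)=(f(t,\cdot,u,v),\,g(t,\cdot,u,v))=(-\varphi,\varphi)$.

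First I would fix $\alpha\in(n/(2p),1)$, so that the Sobolev embedding $X^{\alpha}\hookrightarrow \mathbb{C}(\overline{\Omega})^{2}$ holds, and check that the Nemytskii operator induced by $F$ is locally Lipschitz from $[0,T]\times X^{\alpha}$ into $X$; this is immediate from $\varphi\in C^{1}$ together with that embedding, and is precisely the Lipschitz property announced just before the statement. The abstract existence theorem then produces a unique maximal mild solution $w\in C([0,T_{\max}),X^{\alpha})\cap C^{1}((0,T_{\max}),X)$, and parabolic smoothing upgrades it to the classical solution of (\ref{1.1})--(\ref{1.3}) with the regularity stated in the Introduction.

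Next I would record the nonnegativity of $u$ and $v$, which is needed for $F$ to even be defined (it is prescribed only on the positive orthant). This is where the quasi-positivity hypothesis (\ref{P}) enters: since $f(t,x,0,v)=-\varphi(t,x,0,v)=0\geq 0$ and $g(t,x,u,0)=\varphi(t,x,u,0)=0\geq 0$, the positive orthant $\{u\geq 0,\ v\geq 0\}$ is invariant, so $u_{0},v_{0}\geq 0$ forces $u(t,\cdot),v(t,\cdot)\geq 0$ on $[0,T_{\max})$; one obtains this from the invariant-region criterion or a direct truncation/comparison argument with the zero subsolution.

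Finally, for the blow-up alternative (2.3), assume $T_{\max}<\infty$ and, for contradiction, that $M:=\sup_{0\leq t<T_{\max}}(\|u(t,\cdot)\|_{\infty}+\|v(t,\cdot)\|_{\infty})<\infty$. The polynomial-growth bound (\ref{G}) then yields a uniform bound on $\|F(t,w(t))\|_{X}$ over $[0,T_{\max})$; inserting this into the variation-of-constants formula $w(t)=e^{-tA}(u_{0},v_{0})+\int_{0}^{t}e^{-(t-s)A}F(s,w(s))\,ds$ and using $\|A^{\alpha}e^{-tA}\|\leq C_{\alpha}t^{-\alpha}$ with $\alpha<1$ shows that $t\mapsto\|w(t)\|_{X^{\alpha}}$ stays bounded as $t\uparrow T_{\max}$, contradicting the maximality of $T_{\max}$. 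I expect the only point requiring real care to be this last interplay between the growth exponent $r$ in (\ref{G}), the exponent $p$, and the embedding $X^{\alpha}\hookrightarrow \mathbb{L}^{\infty}(\Omega)$; the remainder is the textbook dichotomy for semilinear parabolic equations.
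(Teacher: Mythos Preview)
Your proposal is correct and follows precisely the standard semigroup/fractional-power approach of \cite{Hen}, \cite{Rot}, \cite{Smo}, \cite{Fri} that the paper invokes; note that the paper does not supply its own proof of this proposition but simply cites these references, so there is nothing further to compare.
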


Before the statement of the results, let us define for a fixed integer $%
p\geq 1,$ the following polynomial functional which is of a great interest
in the following%
\begin{equation}
t\mapsto L(t)=\int\limits_{\Omega }H_{p}\left( u(t,x),v(t,x)\right) dx,
\label{2.4}
\end{equation}%
where%
\begin{equation}
H_{p}\left( u,v\right) =\overset{p}{\underset{i=0}{\sum }}C_{p}^{i}\theta
_{i}u^{i}v^{p-i},  \label{2.5}
\end{equation}%
where the coefficient $C_{p}^{i}$ is given by the formula%
\begin{equation*}
C_{p}^{i}=\frac{p!}{i!.(p-i)!}
\end{equation*}%
which is defined in terms of the factorial function $n!$.

The finite sequence $\left\{ \theta _{i}\right\} $ of positive terms is
defined as follows%
\begin{equation}
\theta _{i}=c^{i+1}K^{i^{2}},\ i=0,\ 1,\ 2,...,  \label{2.6.1}
\end{equation}%
when the nonlinearity satisfies (\ref{1.7.1}) and%
\begin{equation}
\theta _{i}=C^{i+1}K^{i^{2}},\ i=0,\ 1,\ 2,...,  \label{2.6.2}
\end{equation}%
when the nonlinearity satisfies (\ref{1.7.2}), where the constants $c$ and $C
$ satisfy the following inequality 
\begin{equation*}
cK^{2p+1}<1<CK,
\end{equation*}%
and $K$ is any positive constant satisfying%
\begin{equation}
K^{2}>\frac{\left( a+b\right) ^{2}}{4ab}.  \label{2.8}
\end{equation}

\section{\protect\bigskip \textbf{Statement and proof of the main result}}

The main result of the paper is the following

\begin{theorem}
Suppose that the nonlinearity $\varphi \in C^{1}(\mathbb{R}^{+}\mathbb{%
\times \mathbf{\Omega }\times R}\times \mathbb{R},\mathbb{R})$ is at most
polynomial growth with respect to the third and fourth variables and
satisfes condition (\ref{P}) then the functional given by (\ref{2.4})-(\ref%
{2.5}) is uniformly bounded on the interval $\left( 0,T_{\max }\right) $ for
all positive solutions of problem (\ref{1.1})-(\ref{1.3})
\end{theorem}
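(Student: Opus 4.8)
The plan is to differentiate $L$ along the solution and to bound $L'(t)$ tightly enough that $L$ cannot run off to infinity on $(0,T_{\max })$. Differentiating (\ref{2.4})--(\ref{2.5}) and substituting the two equations of (\ref{1.1}) (recall $f=-\varphi $, $g=\varphi $),
\[
L'(t)=\int_{\Omega }\bigl(\partial _{u}H_{p}\,u_{t}+\partial _{v}H_{p}\,v_{t}\bigr)\,dx=I+J,
\]
where $I:=\int_{\Omega }\bigl(a\,\partial _{u}H_{p}\,\Delta u+b\,\partial _{v}H_{p}\,\Delta v\bigr)\,dx$ is the diffusive contribution and $J:=\int_{\Omega }\varphi \,(\partial _{v}H_{p}-\partial _{u}H_{p})\,dx$ the reaction contribution; essentially all the difficulty sits in $J$.

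\emph{The diffusive term.} Integrating by parts and using the Neumann conditions (\ref{1.2}),
\[
I=-\int_{\Omega }\Bigl(a\,\partial _{uu}^{2}H_{p}\,|\nabla u|^{2}+(a+b)\,\partial _{uv}^{2}H_{p}\,\nabla u\cdot \nabla v+b\,\partial _{vv}^{2}H_{p}\,|\nabla v|^{2}\Bigr)\,dx .
\]
Since $u,v\geq 0$ and all $\theta _{i}>0$, one has $\partial _{uu}^{2}H_{p},\partial _{vv}^{2}H_{p}\geq 0$, so the integrand is a nonnegative quadratic form in $(\nabla u,\nabla v)$ provided $ab\,\partial _{uu}^{2}H_{p}\,\partial _{vv}^{2}H_{p}\geq \tfrac{(a+b)^{2}}{4}(\partial _{uv}^{2}H_{p})^{2}$, for which, by (\ref{2.8}), it suffices to prove the sharper pointwise bound $\partial _{uu}^{2}H_{p}\,\partial _{vv}^{2}H_{p}\geq K^{2}(\partial _{uv}^{2}H_{p})^{2}$ on $u,v\geq 0$. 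Expanding the three Hessian entries in the common basis $w_{j}=C_{p-2}^{j}u^{j}v^{p-2-j}$, with respective coefficients $p(p-1)\theta _{j+2}$, $p(p-1)\theta _{j+1}$, $p(p-1)\theta _{j}$, and symmetrising over the two summation indices, the claim reduces to
\[
\theta _{j+2}\theta _{k}+\theta _{j}\theta _{k+2}\ \geq \ 2K^{2}\,\theta _{j+1}\theta _{k+1},\qquad 0\leq j,k\leq p-2 ,
\]
and with the choice (\ref{2.6.1}), resp. (\ref{2.6.2}), the common factor $c^{j+k+4}K^{j^{2}+k^{2}}$ cancels, leaving $K^{4j+4}+K^{4k+4}\geq 2K^{2(j+k)+4}$, which is just AM--GM (with equality when $j=k$). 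Hence $I\leq 0$; moreover, since (\ref{2.8}) is strict the quadratic form dominates a multiple of its diagonal, and using $\partial _{uu}^{2}H_{p}\geq p(p-1)\theta _{p}u^{p-2}$ and $\partial _{vv}^{2}H_{p}\geq p(p-1)\theta _{0}v^{p-2}$ one retains a genuine dissipation $I\leq -\delta \int_{\Omega }\bigl(|\nabla u^{p/2}|^{2}+|\nabla v^{p/2}|^{2}\bigr)\,dx$ for a fixed $\delta >0$.

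\emph{The reaction term and conclusion.} On $(0,\overline{t}\,]$ — when the time $\overline{t}$ of (\ref{1.9}) exists and is finite — the solution of (\ref{1.1})--(\ref{1.3}) is a bounded classical solution, so $L$ is trivially bounded there; it thus remains to treat $(\overline{t},T_{\max })$, where $\varphi $ has a fixed sign by the dichotomy (\ref{1.7.1})--(\ref{1.7.2}). A direct computation gives $\partial _{v}H_{p}-\partial _{u}H_{p}=p\sum_{i=0}^{p-1}C_{p-1}^{i}(\theta _{i}-\theta _{i+1})u^{i}v^{p-1-i}$, and the constraints $cK^{2p+1}<1$, resp. $CK>1$ (with $K>1$ forced by (\ref{2.8})), make $\{\theta _{i}\}_{0\leq i\leq p}$ monotone, so $\partial _{v}H_{p}-\partial _{u}H_{p}$ is sign-definite and $|\partial _{v}H_{p}-\partial _{u}H_{p}|\leq \bigl(p\max _{i}\theta _{i}\bigr)(u+v)^{p-1}$. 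Combining with the growth bound (\ref{G}),
\[
J\leq \int_{\Omega }|\varphi |\,|\partial _{v}H_{p}-\partial _{u}H_{p}|\,dx\leq A\int_{\Omega }(u+v)^{p+r-1}\,dx+B\int_{\Omega }(u+v)^{p-1}\,dx
\]
for fixed $A,B>0$. The subcritical integral is harmless: by H\"older and $H_{p}\geq (\min _{i}\theta _{i})(u+v)^{p}$ it is $\leq C(1+L(t))$. For $\int_{\Omega }(u+v)^{p+r-1}$ I would invoke the one extra a priori estimate available — the conservation of total mass coming from the strict balance law (\ref{M}) together with (\ref{1.2}), $\Vert u(t,\cdot )\Vert _{1}+\Vert v(t,\cdot )\Vert _{1}\equiv \Vert u_{0}\Vert _{1}+\Vert v_{0}\Vert _{1}=:M$ — combined with a Gagliardo--Nirenberg--Sobolev inequality applied to $w:=(u+v)^{p/2}$, interpolating $\Vert w\Vert _{2(p+r-1)/p}$ between $\Vert \nabla w\Vert _{2}$ and the mass-controlled quantity $\Vert w\Vert _{2/p}^{2/p}=\Vert u+v\Vert _{1}=M$; Young's inequality then absorbs a fraction of the gradient term into the dissipation of $I$, giving
\[
L'(t)\leq -\tfrac{\delta }{2}\int_{\Omega }\bigl(|\nabla u^{p/2}|^{2}+|\nabla v^{p/2}|^{2}\bigr)\,dx+C\bigl(1+L(t)\bigr),
\]
and one more Gagliardo--Nirenberg inequality — bounding $L(t)\sim \Vert w\Vert _{2}^{2}$ by $\Vert \nabla w\Vert _{2}$ and $M$, i.e. $\int_{\Omega }|\nabla w|^{2}\geq c\,L(t)^{\gamma }-C$ with $\gamma >1$ — converts this into $L'(t)\leq C_{1}+C_{2}L(t)-C_{3}L(t)^{\gamma }$, which forces $\sup_{0<t<T_{\max }}L(t)<\infty $.

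The hard part, clearly, is the reaction term $J$: it carries the ``wrong'' sign, and its polynomial degree $p+r-1$ exceeds that of $H_{p}$, so the interpolation above closes cleanly only when the growth exponent $r$ lies below the mass-critical threshold $1+\tfrac{2}{n}$. For larger $r$ one must squeeze more out of the structure — the precise form of the weights $\theta _{i}=c^{i+1}K^{i^{2}}$, the sign-definiteness of $\partial _{v}H_{p}-\partial _{u}H_{p}$, and the observation that uniform boundedness of $L$ for a single large $p$ propagates automatically to every smaller $p$ (since $\int_{\Omega }(u+v)^{p}\leq |\Omega |^{1-p/p'}\bigl(\int_{\Omega }(u+v)^{p'}\bigr)^{p/p'}$ for $p<p'$), so that it suffices to treat $p$ large. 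This is where the bulk of the work, and the role of the ``judicious'' choice of functional, has to be invested.
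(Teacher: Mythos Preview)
Your treatment of the diffusive term $I$ is fine (indeed slightly more elaborate than needed: the paper just checks the discriminant of each summand separately via $\theta_{i}\theta_{i+2}/\theta_{i+1}^{2}=K^{2}>(a+b)^{2}/4ab$). The gap is in the reaction term $J$.

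You observe both of the two relevant facts --- that $\varphi$ has a fixed sign on $(\overline{t},T_{\max})$ by the dichotomy (\ref{1.7.1})--(\ref{1.7.2}), and that $\partial_{v}H_{p}-\partial_{u}H_{p}=p\sum_{i}C_{p-1}^{i}(\theta_{i}-\theta_{i+1})u^{i}v^{p-1-i}$ is sign-definite because $\{\theta_{i}\}$ is monotone --- but you never put them together. The whole purpose of offering \emph{two} weight sequences (\ref{2.6.1}) and (\ref{2.6.2}) is precisely to pick the monotonicity of $\{\theta_{i}\}$ \emph{according to} which of (\ref{1.7.1}) or (\ref{1.7.2}) holds, so that the sign of $\partial_{v}H_{p}-\partial_{u}H_{p}$ is \emph{opposite} to that of $\varphi$ on $(\overline{t},T_{\max})$. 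With that matching one has $J=\int_{\Omega}\varphi\,(\partial_{v}H_{p}-\partial_{u}H_{p})\,dx\leq 0$ pointwise in time, hence $L'(t)=I+J\leq 0$ and $L(t)\leq L(\overline{t})$ for all $t\in(\overline{t},T_{\max})$. No growth bound on $\varphi$, no Gagliardo--Nirenberg, no absorption into the dissipation is needed; the polynomial growth hypothesis (\ref{G}) is used only afterwards, to pass from $L^{p}$ bounds on $(u,v)$ to $L^{p}$ bounds on the reaction and invoke the regularising effect.

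Your alternative route --- bounding $|J|$ by $\int(u+v)^{p+r-1}$ and interpolating against the gradient dissipation from $I$ --- is not wrong in spirit, but as you yourself note it only closes for $r\leq 1+2/n$, so it does not prove the theorem as stated (arbitrary polynomial growth). The missing idea is exactly the sign matching above; once you see that $J\leq 0$, the extracted dissipation $-\delta\int|\nabla u^{p/2}|^{2}$ and the entire interpolation machinery become superfluous.
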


\begin{proof}
By differentiating $L$ with respect to $t$, we get%
\begin{equation}
L^{\prime }(t)=\int\limits_{\Omega }\overset{p}{\underset{i=1}{\sum \ }}%
\left( \;iC_{p}^{i}\theta _{i}u^{i-1}v^{p-i}\right) \frac{\partial u}{%
\partial t}dx+\int\limits_{\Omega }\overset{p-1}{\underset{i=0}{\sum \ }}%
\left( (p-i)C_{p}^{i}\theta _{i}u^{i}v^{p-i-1}\right) \frac{\partial v}{%
\partial t}dx.  \label{3.5}
\end{equation}%
Using the following well-known relation%
\begin{equation}
iC_{p}^{i}=pC_{p-1}^{i-1},\text{\ for\ all\ }\;i=1,...,p,  \label{3.6}
\end{equation}%
and interchanging the indexes, we get%
\begin{eqnarray*}
L^{\prime }(t) &=&p\overset{p-1}{\underset{i=0}{\sum }}C_{p-1}^{i}\int%
\limits_{\Omega }u^{i}v^{p-1-i}\left[ \left( a\theta _{i+1}\Delta u+b\theta
_{i}\Delta v\right) +\left( \theta _{i+1}f(u,v)+\theta _{i}g(u,v)\right) %
\right] dx \\
&=&I+J,
\end{eqnarray*}%
where%
\begin{eqnarray}
I &=&p\overset{p-1}{\underset{i=0}{\sum }}\int\limits_{\Omega
}C_{p-1}^{i}u^{i}v^{p-1-i}\left( a\theta _{i+1}\Delta u+b\theta _{i}\Delta
v\right) dx,  \label{3.7} \\
&&  \notag
\end{eqnarray}%
and%
\begin{eqnarray}
J &=&p\overset{p-1}{\underset{i=0}{\sum }}C_{p-1}^{i}\int\limits_{\Omega
}\left( -\theta _{i+1}+\theta _{i}\right) f(t,x,u\left( t,x\right) ,v\left(
t,x\right) )u^{i}v^{p-1-i}dx.  \label{3.8} \\
&&  \notag
\end{eqnarray}%
The Green's formula applied to the integral $I$ gives%
\begin{eqnarray*}
I &=&-p\overset{p-1}{\underset{i=0}{\sum }}C_{p-1}^{i}\int\limits_{\Omega }%
\left[ a\nabla \left( \theta _{i+1}u^{i}v^{p-1-i}\right) \nabla u+b\nabla
\left( \theta _{i}u^{i}v^{p-1-i}\right) \nabla v\right] dx \\
&&
\end{eqnarray*}

Using (\ref{3.6}) another time and interchanging the indexes, the integral $%
I $ becomes 
\begin{equation}
I=-p(p-1)\overset{p-2}{\underset{i=0}{\sum }}C_{p-2}^{i}\
\int\limits_{\Omega }u^{i}v^{p-2-i}\left( a\theta _{i+2}\left\vert \nabla
u\right\vert ^{2}+\left( a+b\right) \theta _{i+1}\nabla u\nabla v+b\theta
_{i}\left\vert \nabla v\right\vert ^{2}\right) dx.  \label{3.9}
\end{equation}

We shall show that%
\begin{equation}
I\leq 0;\ \ J\leq 0\ \ \text{in the interval }\left( \overline{t},T_{\max
}\right) .  \label{3.10}
\end{equation}

Clearly, from (\ref{2.6.1}) we have%
\begin{equation}
\frac{\theta _{i}\theta _{i+2}}{\theta _{i+1}^{2}}=K^{2},\ i=0,1,...p-2,
\label{2.9}
\end{equation}%
which implies that the quadratic form with respect $\nabla u$ and $\nabla v$
under the integral $I$, is non positive. So we have

\begin{equation}
I\leq 0,\text{ for all }t\in \left( \overline{t},T_{\max }\right) .
\label{3.11}
\end{equation}%
For the integral $J$, two cases arise: Either (\ref{1.7.1}) otherwise (\ref%
{1.7.2}).

In the first case we choose%
\begin{equation*}
\theta _{i}=c^{i+1}K^{i^{2}},\ i=0,\ 1,\ 2,...,
\end{equation*}%
where the constant $c$ is chosen in such away that the sequence $\left\{
\theta _{i}\right\} $ is decreasing. That is 
\begin{equation}
cK^{2p+1}<1,  \label{2.7.1}
\end{equation}%
where%
\begin{equation*}
K^{2}>\frac{\left( a+b\right) ^{2}}{4ab}.
\end{equation*}

Then%
\begin{equation*}
-\theta _{i+1}+\theta _{i}\geq 0,\ i=0,\ 1,\ 2,....
\end{equation*}%
and since from (\ref{1.7.1}) the reaction $f$ \ is non positive, we conclude
that%
\begin{equation}
J\leq 0,\text{ for all }t\in \left( \overline{t},T_{\max }\right) .
\label{2.3}
\end{equation}%
Combining (\ref{2.3}) and (\ref{3.11}), we get (\ref{3.10}).

In the second case, we choose a constant $C>\frac{1}{K}$ and 
\begin{equation*}
\theta _{i}=C^{i+1}K^{i^{2}},\ i=0,\ 1,\ 2,...
\end{equation*}%
Clearly, the sequence $\left\{ \theta _{i}\right\} $ is increasing.
Following the same reasoning as in the first case, we obtain another time (%
\ref{2.3}) and (\ref{3.11}).

Let's recap: In the two cases (\ref{1.7.1}) and (\ref{1.7.2}), we have
proved that the functional $L(t)$ is decreasing on the interval $\left( 
\overline{t},T_{\max }\right) $. That is%
\begin{equation*}
L(t)\leq L(\overline{t}),\text{ for all }t\in \left( \overline{t},T_{\max
}\right) .
\end{equation*}%
obviously we conclude that $L(t)$ is uniformly bounded in the interval $%
\left( \overline{t},T_{\max }\right) $. In the remaining interval $\left( 0,%
\overline{t}\right) ,$ $u$ and $v$ are in $L^{\infty }\left( \left[ 0,T-\tau %
\right] \times \Omega \right) $ for all $\tau \in \left( 0,T\right) $ since
they are classical and so they are uniformly bounded. Finally, we can say
that our functional is uniformly bounded in the hole interval of local
existence $\left( 0,T_{\max }\right) $: That is there exists a positive
constant $M$ independent of the time such that%
\begin{equation*}
L(t)\leq M,\text{ for all }t\in \left( 0,T_{\max }\right) .
\end{equation*}%
This ends the proof of the Theorem.
\end{proof}

Since the coefficients $\theta _{i}$ in the functional $L(t)$ are bounded
below, we can find another constant $R$ such that%
\begin{equation*}
\int\limits_{\Omega }\left( u+v\right) ^{p}dx\leq R.L(t),\text{ for all }%
t\in \left( 0,T_{\max }\right) .
\end{equation*}%
The two above inequalities show that the solutions of problem (\ref{1.1})-(%
\ref{1.3}) are $L^{p}\left( \Omega \right) $ for all $p\geq 1$ and since the
nonlinearity $\varphi \in C^{1}(\mathbb{R}^{+}\mathbb{\times \mathbf{\Omega }%
\times R}\times \mathbb{R},\mathbb{R})$ and at most polynomial growth with
respect to $u$ and $v$, then by the preliminary remarks\ we deduce the
global existence in time of the solutions, that is $T_{\max }=+\infty $.

We have proved the following

\begin{corollary}
Suppose that the nonlinearity  $\varphi \in C^{1}(\mathbb{R}^{+}\mathbb{%
\times \mathbf{\Omega }\times R}\times \mathbb{R},\mathbb{R})$ is at most of polynomial growth with respect to the third and fourth variables, then all
solutions of system (\ref{1.1}) with nonnegative uniformly bounded initial
data and homogeneous Neumann boundary conditions exist globally in time.
\end{corollary}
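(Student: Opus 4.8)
The plan is to bootstrap from the uniform bound on the Lyapunov functional $L$ delivered by the Theorem, via the standard smoothing/regularizing machinery for semilinear parabolic systems recalled in Section 2. First, fix once and for all an integer $p$ large enough that $p/r>n/2$, where $r\ge 1$ is the exponent in the polynomial-growth condition (\ref{G}), and apply the Theorem with this $p$: there is a constant $M=M(p)$, independent of time, with $L(t)\le M$ for all $t\in(0,T_{\max})$. Every coefficient $\theta_i$ in (\ref{2.5}) is bounded below by a positive constant (in case (\ref{1.7.1}) the sequence is decreasing, so $\min_i\theta_i=\theta_p$; in case (\ref{1.7.2}) it is increasing, so $\min_i\theta_i=\theta_0$), and since $u,v\ge 0$ the binomial theorem gives $H_p(u,v)\ge(\min_i\theta_i)\sum_{i=0}^{p}C_p^{i}u^{i}v^{p-i}=(\min_i\theta_i)(u+v)^p$. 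Hence, with $R=1/\min_i\theta_i$,
\begin{equation*}
\int_\Omega\bigl(u(t,x)+v(t,x)\bigr)^p\,dx\le R\,L(t)\le R\,M,\qquad t\in(0,T_{\max}),
\end{equation*}
so $\|u(t,\cdot)\|_p+\|v(t,\cdot)\|_p$ is bounded uniformly on $[0,T_{\max})$.

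Next I would feed this into the growth hypothesis. By (\ref{G}), $|\varphi(t,x,u,v)|\le C_1+C_2(u+v)^r$, so for $q:=p/r>n/2$ one gets
\begin{equation*}
\|\varphi(t,\cdot)\|_q\le C_1+C_2\bigl\|(u(t,\cdot)+v(t,\cdot))^r\bigr\|_q=C_1+C_2\,\|u(t,\cdot)+v(t,\cdot)\|_p^{\,r}\le C_1+C_2\,(RM)^{r/p},
\end{equation*}
that is, $\|\varphi(t,\cdot)\|_q$ is bounded uniformly on $[0,T_{\max})$ for an exponent $q>n/2$. This is exactly the quantity identified in Section 2 as sufficient for global existence.

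Then I would invoke the regularizing effect of the (Neumann) heat semigroup, as in Henry \cite{Hen} and Haraux--Youkana \cite{Har-You}. Writing the first component via Duhamel as $u(t)=e^{ta\Delta}u_0+\int_0^{t}e^{(t-s)a\Delta}f(s,\cdot)\,ds$ and using the $\mathbb{L}^q(\Omega)\to\mathbb{L}^\infty(\Omega)$ smoothing estimate $\|e^{t\Delta}w\|_\infty\le C\,(1+t^{-n/(2q)})\,\|w\|_q$, whose singularity at $t=0$ is integrable because $q>n/2$, together with $\|f\|_q=\|g\|_q=\|\varphi\|_q\le\mathrm{const}$, one obtains that $\|u(t,\cdot)\|_\infty+\|v(t,\cdot)\|_\infty$ is bounded uniformly on $[0,T_{\max})$ --- working, if necessary, on time intervals of unit length and propagating the bound, and using that the total mass $\int_\Omega(u+v)\,dx$ is conserved by the balance law (\ref{M}) together with the homogeneous Neumann conditions, which controls the constant mode. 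By the blow-up alternative (2.3) of the Proposition, this uniform $\mathbb{L}^\infty$-bound forces $T_{\max}=+\infty$, which is the assertion of the Corollary.

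The only genuinely delicate point is the last paragraph: converting an $\mathbb{L}^q$-bound on $\varphi$ into a \emph{time-uniform} $\mathbb{L}^\infty$-bound on $(u,v)$. A naive Duhamel estimate produces a bound that grows with $T_{\max}$; one must either iterate on windows of fixed length --- noting that the bound obtained on $[t,t+1]$ depends only on the data at time $t$, which is itself already controlled --- or exploit the exponential decay of $e^{t\Delta}$ on the orthogonal complement of the constants combined with conservation of mass. Everything else (the lower bound on the $\theta_i$, the passage through (\ref{G}), and the final appeal to (2.3)) is routine.
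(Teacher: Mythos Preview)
Your proof is correct and follows essentially the same route as the paper: bound the $\theta_i$ below to convert the Lyapunov bound of the Theorem into a uniform $L^p$ estimate on $u+v$, then use the polynomial-growth hypothesis to control $\Vert\varphi\Vert_q$ for some $q>n/2$ and appeal to the regularizing effect together with the blow-up alternative. The paper is terser---it simply invokes the ``preliminary remarks'' of Section~2 at the last step---whereas you spell out the Duhamel argument and flag the time-uniformity issue, but the underlying logic is identical.
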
\label{Global}

\begin{remark}
The above corollary remains true for more general  boundary conditions 
\begin{equation}
\left\{ 
\begin{array}{l}
\lambda _{1}u+(1-\lambda _{1})\partial _{\eta }u=0,\ \ \ \ \  \\ 
\ \ \ \ \ \ \ \ \ \ \ \ \ \ \ \ \ \ \ \ \ \ \ \ \ \ \ \ \ \ \ \ \ \ \ \ \ \
\ \ \ \ \text{\ on }\mathbb{R}^{+}\times \partial \Omega , \\ 
\lambda _{2}v+(1-\lambda _{2})\partial _{\eta }v=0,\text{\ } \\ 
,\ \ \ \ \ \ \ \ \ \ \ \ \ \ \ \ \ \ \ \ \ 
\end{array}%
\right.   \label{3.D.C.R.1.4}
\end{equation}%
where, for $\;i=1,\;2$, different type of boundary conditions are imposed:
homogeneous Robin type ($0<\lambda _{i}<1$) or homogeneous Neumann type ($%
\lambda _{i}=0$) or homogeneous Dirichlet type ($1-\lambda _{i}=0,$). Also a
mixture of homogeneous Dirichlet with homogeneous Robin boundary conditions (%
$1-\lambda _{i}=0,\;i=1$ or$\;2$ and $0<\lambda _{j}<1,,\;j=1,\;2$, with $%
i\neq j$) can be assumed.
\end{remark}

\section{Applications}

\subsection{\protect\bigskip The first case}

We begin by the following system%
\begin{equation}
\left\{ 
\begin{array}{l}
\dfrac{\partial u}{\partial t}-a\Delta u=-c(t,x)\psi (u,v),\text{\qquad
\qquad in }\mathbb{R}^{+}\times \Omega , \\ 
\\ 
\dfrac{\partial v}{\partial t}-b\Delta v=c(t,x)\psi (u,v),\text{\qquad
\qquad in }\mathbb{R}^{+}\times \Omega ,%
\end{array}%
\right.  \label{1.4.1}
\end{equation}

where $c(t,x)\in C(\mathbb{R}^{+}\mathbb{\times }\Omega ,\mathbb{R})$ isn't
necessarily constant sign with 
\begin{equation}
\psi (0,v)=\psi (u,0)=0,\text{ for a all }u\geq 0,\ v\geq 0,  \label{1.5}
\end{equation}%
which assures the positivity of the solution on $\Omega $ at all time. When $%
c(t,x)$ is independent of the time, for example 
\begin{equation*}
c(x)<0\text{ in }\left( -1,0\right) ,\ \ c(0)=0,\ \ c(x)>0\text{ in }\left(
0,1\right) ,
\end{equation*}%
the authors in \cite{Pie-Sch 1993}, showed that the solutions are locally
uniformly bounded in $L_{\infty }\left( \left[ 0,\infty )\times \right(
0,1\right) $ and $L^{\infty }\left( \left[ 0,\infty )\times \right(
-1,0\right) $ but they didn't prove global existence. It can arrive that, in
some very special cases, system (\ref{1.1}) with appropriate nonhomogeneous
Dirichlet conditions presents plow-up in finite time (see for example \cite[
22]{Pie-Sch 22}). Recently, in the same case S. Kouachi \cite{Kou Arx 22}\ \
proved global existence by using the same functional (\ref{2.4}) below. To
our knowledge, the question with general $c(t,x)$ is still\textbf{\ }an open
question in all space dimensions (see \cite{Pie}) until now. According to
Corollary \ref{Global}

\begin{proposition}
Suppose that the nonlinearity $\psi (u,v)\in C^{1}(\mathbb{R}^{+}\times 
\mathbb{R}^{+},\mathbb{R})$ is at most of polynomial growth with respect to $%
u$ and $v$, satisfying (\ref{1.5}) and that $c(t,x)\in C(\mathbb{R}^{+}%
\mathbb{\times }\Omega ,\mathbb{R})$ isn't necessarily constant sign, then
all solutions of   
\begin{equation}
\left\{ 
\begin{array}{l}
\frac{\partial u}{\partial t}-a\Delta u=f(t,x,u,v)=-c(t,x)\psi (u,v)\text{%
\qquad \qquad in }\mathbb{R}^{+}\times \Omega , \\ 
\frac{\partial v}{\partial t}-b\Delta v=g(t,x,u,v)=c(t,x)\psi (u,v)\text{%
\qquad \qquad in }\mathbb{R}^{+}\times \Omega , \\ 
\frac{\partial u}{\partial \eta }=\frac{\partial v}{\partial \eta }=0\text{%
\qquad \qquad on }\mathbb{R}^{+}\times \partial \Omega , \\ 
u(0,x)=u_{0}(x)\geq 0,\qquad v(0,x)=v_{0}(x)\geq 0,\qquad \text{in}\;\Omega ,%
\end{array}%
\right.   \label{3.12}
\end{equation}%
are global.
\end{proposition}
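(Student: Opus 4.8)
The plan is to recognize \eqref{3.12} as the special case of \eqref{1.1}--\eqref{1.3} obtained by taking $\varphi(t,x,u,v):=c(t,x)\psi(u,v)$, and then to apply Corollary \ref{Global}. With this choice $f=-\varphi$ and $g=+\varphi$, so the strict balance law \eqref{M} holds identically; the boundary conditions in \eqref{3.12} are precisely the homogeneous Neumann conditions \eqref{1.2}; and the initial data are nonnegative and bounded, as required. It therefore remains only to check the two structural hypotheses behind Corollary \ref{Global}.

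First, the quasi-positivity condition \eqref{P}: by \eqref{1.5} we have $\psi(u,0)=\psi(0,v)=0$ for all $u,v\ge 0$, hence $\varphi(t,x,u,0)=c(t,x)\psi(u,0)=0$ and $\varphi(t,x,0,v)=c(t,x)\psi(0,v)=0$, which keeps the quadrant $\mathbb{R}^{+}\times\mathbb{R}^{+}$ invariant and the solution nonnegative on its maximal interval. Second, the polynomial growth \eqref{G}: since $\psi$ is at most of polynomial growth there are $C_1',C_2'\ge 0$ and $r\ge 1$ with $|\psi(s_1,s_2)|\le C_1'+C_2'(s_1+s_2)^{r}$ for $s_1,s_2\ge 0$.

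The one genuinely delicate point --- and the step I expect to be the main obstacle --- is that $c$ is only assumed continuous in $(t,x)$ and a priori not globally bounded in time, whereas the Theorem is stated for $\varphi\in C^{1}$. I would dispose of this by arguing by contradiction: assume $T_{\max}<\infty$; then $c$ is bounded on the compact set $[0,T_{\max}]\times\overline{\Omega}$, say $|c|\le C_0$, so $|\varphi(t,x,s_1,s_2)|\le C_0C_1'+C_0C_2'(s_1+s_2)^{r}$ holds on $(0,T_{\max})$, i.e. \eqref{G}. Moreover $\varphi$ is locally Lipschitz in $(u,v)$ uniformly on $[0,T_{\max}]\times\overline{\Omega}$, which is all that is actually used in the local existence Proposition and in differentiating $L(t)$ in the proof of the Theorem (no $C^{1}$ regularity of $\varphi$ in $t,x$ is invoked there); if one insists on the literal framework one first smooths $c$ to a $C^{1}$ function, applies the Theorem, and passes to the limit using bounds uniform in the approximation. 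The Theorem then yields $L(t)\le M$ on $(0,T_{\max})$ with $M$ independent of $t$; by the remark following it, $\int_{\Omega}(u+v)^{p}\,dx\le RM$ for every $p\ge 1$. Hence $\|\varphi(t,\cdot)\|_{q}$ is uniformly bounded on $[0,T_{\max})$ for every $q\ge 1$, in particular for some $q>n/2$, so the regularizing effect recalled in Section 2 gives a uniform bound on $\|u(t,\cdot)\|_{\infty}+\|v(t,\cdot)\|_{\infty}$ on $[0,T_{\max})$. This contradicts the blow-up alternative \eqref{G.E.}, so $T_{\max}=+\infty$ and all solutions of \eqref{3.12} are global.
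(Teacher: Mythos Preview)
Your proof is correct and follows exactly the paper's own approach: the paper's proof consists of the single sentence ``Since the reaction $c(t,x)\psi(u,v)$ satisfies all conditions of Corollary \ref{Global}, then obviously the solutions of problem (\ref{3.12}) are global.'' Your additional care about the fact that $c$ is only continuous (not $C^1$) and possibly unbounded in time is a point the paper simply ignores, so your argument is in fact more complete than the original.
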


\begin{proof}
Since the reaction $c(t,x)\psi (u,v)$ satisfies all conditions of corollary %
\ref{Global}, then obviously the solutions of problem (\ref{3.12}) are
global.
\end{proof}

\subsection{Coupled reversible chemical reactions}

\bigskip The special case with the following system%
\begin{equation}
\left\{ 
\begin{array}{l}
\dfrac{\partial u}{\partial t}-a\Delta u=-\ h_{1}u^{l}v^{q}+h_{2}u^{r}v^{s},%
\text{\qquad \qquad in }\mathbb{R}^{+}\times \Omega , \\ 
\\ 
\dfrac{\partial v}{\partial t}-b\Delta v=\ h_{1}u^{l}v^{q}-h_{2}u^{r}v^{s},%
\text{\qquad \qquad in }\mathbb{R}^{+}\times \Omega ,%
\end{array}%
\right.  \label{1.6.1}
\end{equation}%
which describes the following reversible chemical reaction%
\begin{equation*}
lA+qB\overset{h}{\underset{k}{\rightleftarrows }}rA+sB,
\end{equation*}%
\ where $u$ and $v$ are the concentrations of the reactants $A$ and $B$
respectively. The authors in \cite{Kou(DPDE)}\ and lately in \cite{Pie}
obtained with a slight difference, under restrictive conditions on the
orders of the reactants, the following partial result

\begin{equation}
\left\{ 
\begin{array}{l}
l+q\leq 1\text{ or }r+s\leq 1, \\ 
\text{or }r+s>l+q>1\text{ and }l-r<sl-qr<s-q, \\ 
\text{or }l+q>r+s>1\text{ and }s-q<sl-qr<l-r.%
\end{array}%
\right.  \label{2.11}
\end{equation}%
System (\ref{1.6.1}) is on the form (\ref{1.4.1}) where $c(t,x)$ is constant
sign but $\psi (u,v)$ doesn't 
\begin{equation*}
c(t,x)\equiv 1\text{ and }\psi (u,v)=h_{1}u^{l}v^{q}-h_{2}u^{r}v^{s}.
\end{equation*}%
Corollary \ref{Global} is always applicable in this case and we have the
following Proposition

\begin{proposition}
\bigskip The solutions of problem%
\begin{equation}
\left\{ 
\begin{array}{l}
\frac{\partial u}{\partial t}-a\Delta
u=f(t,x,u,v)=-h_{1}u^{l}v^{q}+h_{2}u^{r}v^{s}\text{\qquad \qquad in }\mathbb{%
R}^{+}\times \Omega , \\ 
\frac{\partial v}{\partial t}-b\Delta
v=g(t,x,u,v)=h_{1}u^{l}v^{q}-h_{2}u^{r}v^{s}\text{\qquad \qquad in }\mathbb{R%
}^{+}\times \Omega , \\ 
\frac{\partial u}{\partial \eta }=\frac{\partial v}{\partial \eta }=0\text{%
\qquad \qquad on }\mathbb{R}^{+}\times \partial \Omega , \\ 
u(0,x)=u_{0}(x)\geq 0,\qquad v(0,x)=v_{0}(x)\geq 0,\qquad \text{in}\;\Omega ,%
\end{array}%
\right.   \label{3.13}
\end{equation}%
are global.
\end{proposition}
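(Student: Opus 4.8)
The plan is simply to observe that problem (\ref{3.13}) is a particular instance of the class covered by Corollary \ref{Global}, with the single reaction
$\varphi(t,x,u,v)=h_{1}u^{l}v^{q}-h_{2}u^{r}v^{s}$, so that $f=-\varphi$ and $g=\varphi$ (this is system (\ref{1.4.1}) with $c(t,x)\equiv 1$ and $\psi=\varphi$). Thus the whole argument reduces to checking the structural hypotheses of that corollary: $C^{1}$ regularity of the reaction, the coordinate-axis condition (\ref{P}), and the polynomial growth bound (\ref{G}); the strict balance law (\ref{M}) is immediate since $f+g=0$.

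First I would dispatch the regularity: each of the two monomials $u^{l}v^{q}$ and $u^{r}v^{s}$ is continuously differentiable up to the closed positive quadrant provided the orders $l,q,r,s$ are $\geq 1$, which we assume; hence $\varphi\in C^{1}(\mathbb{R}^{+}\times\mathbb{R}^{+},\mathbb{R})$ and the local existence Proposition applies. Next, for (\ref{P}): putting $v=0$ annihilates both terms because $q\geq 1$ and $s\geq 1$, and putting $u=0$ annihilates both terms because $l\geq 1$ and $r\geq 1$, so $\varphi(t,x,u,0)=\varphi(t,x,0,v)=0$; this is exactly the quasi-positivity that confines $(u,v)$ to the positive quadrant for all time. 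Finally, for the growth (\ref{G}), I would bound $u^{l}v^{q}\leq 1+(u+v)^{l+q}$ and $u^{r}v^{s}\leq 1+(u+v)^{r+s}$ on $\mathbb{R}^{+}\times\mathbb{R}^{+}$, whence $|\varphi(t,x,u,v)|\leq (h_{1}+h_{2})\,(1+(u+v)^{\max\{l+q,\,r+s\}})$, i.e. (\ref{G}) holds with growth order $\max\{l+q,\,r+s\}$ and constants $C_{1}=C_{2}=h_{1}+h_{2}$.

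With these verifications in place, Corollary \ref{Global} applies directly and yields $T_{\max}=+\infty$ for every nonnegative bounded initial datum. I would then remark where the genuine difficulty lies: $\psi=h_{1}u^{l}v^{q}-h_{2}u^{r}v^{s}$ really does change sign over the quadrant (positive where $h_{1}u^{l}v^{q}>h_{2}u^{r}v^{s}$, negative elsewhere) and may have arbitrarily large polynomial degree, so neither the maximum principle nor the classical $L^{1}$--regularising argument is available. That obstacle, however, is already absorbed in the proofs of the Theorem and Corollary \ref{Global}: one isolates the last time $\overline{t}$ at which $\varphi$ vanishes, after which $f$ keeps a fixed sign (case (\ref{1.7.1}) or (\ref{1.7.2})), and the polynomial Lyapunov functional $L(t)$ built from the tuned weights $\theta_{i}=c^{i+1}K^{i^{2}}$ (respectively $C^{i+1}K^{i^{2}}$) is nonincreasing on $(\overline{t},T_{\max})$, giving uniform $L^{p}$ bounds on $(u,v)$ for every $p\geq 1$, hence a uniform bound on $\|\varphi\|_{p}$ for some $p>n/2$.

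Consequently, nothing in the proposed proof is more than a bookkeeping check of the hypotheses of Corollary \ref{Global}; the main point to highlight is that the restrictive algebraic conditions (\ref{2.11}) on the orders $l,q,r,s$ required in \cite{Kou(DPDE)} and \cite{Pie} are no longer needed. I do not expect any real analytic obstacle here, beyond making the standing assumption that $l,q,r,s\geq 1$ explicit so that the reaction is indeed $C^{1}$ on the closed quadrant and vanishes on its boundary.
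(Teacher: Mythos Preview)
Your proposal is correct and takes exactly the same route as the paper: the paper's proof is the single sentence ``The proof is an immediate consequence of Corollary \ref{Global},'' and you have simply spelled out the hypothesis checks (regularity, condition (\ref{P}), polynomial growth (\ref{G})) that make that invocation legitimate. Your added caveat that one should take $l,q,r,s\geq 1$ to secure $C^{1}$ regularity on the closed quadrant and the vanishing condition (\ref{P}) is a reasonable clarification that the paper leaves implicit.
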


\begin{proof}
\bigskip The proof is an immediate consequence of Corollary \ref{Global}.
\end{proof}

\subsection{Extension to $m\times m$ reversible chemical systems}

\subsubsection{\protect\bigskip Tripled reversible chemical reactions}

Let's consider the following tripled reaction-diffusion system%
\begin{equation}
\partial _{t}u-d_{1}\Delta
u=a_{1}u^{p_{1}}v^{q_{1}}w^{r_{1}}-a_{2}u^{p_{2}}v^{q_{2}}w^{r_{2}},\ \ \ \
\ \ \ \ \ \ \ \ \ \ \ \ \ \ \ \ \ \ \   \label{3.D.C.R.1.1}
\end{equation}%
\begin{equation}
\partial _{t}v-d_{2}\Delta
v=a_{1}u^{p_{1}}v^{q_{1}}w^{r_{1}}-a_{2}u^{p_{2}}v^{q_{2}}w^{r_{2}},\ \ \
\;\;\text{in }\mathbb{R}^{+}\times \Omega ,  \label{3.D.C.R.1.2}
\end{equation}%
\begin{equation}
\partial _{t}w-d_{3}\Delta
w=-a_{1}u^{p_{1}}v^{q_{1}}w^{r_{1}}+a_{2}u^{p_{2}}v^{q_{2}}w^{r_{2}},\ \ \ \
\ \ \ \ \ \ \ \ \ \ \ \ \ \ \   \label{3.D.C.R.1.3}
\end{equation}%
with Neumann or Dirichlet homogenous boundary conditions and initial data
nonnegative and uniformly bounded on $\Omega $, where the constants $%
a_{i},p_{i},q_{i},r_{i},\ \ i=1,2$, are positive. System (\ref{3.D.C.R.1.1}%
)-(\ref{3.D.C.R.1.3}) describes a model for the following reversible
chemical reaction%
\begin{equation}
p_{1}U+q_{1}V+r_{1}W\rightleftarrows p_{2}U+q_{2}V+r_{2}W.
\label{3.D.C.R.1.7}
\end{equation}%
In \cite{Kou Hal 22}, we obtained under the following very restrictive
conditions on nonlinearities growths, global existence:

\begin{equation}
p_{i}+q_{i}+r_{i}\leq 1,\ \text{ }i=1,\ 2,  \label{3.D.C.R.3.1}
\end{equation}%
or%
\begin{eqnarray}
p_{j}+q_{j}+r_{j} &>&p_{i}+q_{i}+r_{i}>1,\ \ \ \ i=1,\ j=2\ \text{or}\ i=2,\
j=1  \notag \\
&&and  \label{3.D.C.R.3.2.1} \\
&&\left\{ 
\begin{array}{l}
p_{i}-p_{j}<\left( p_{i}q_{j}-p_{j}q_{i}\right) +\left(
p_{i}r_{j}-p_{j}r_{i}\right) , \\ 
\text{ }q_{i}-q_{j}<\left( q_{i}p_{j}-q_{j}p_{i}\right) +\left(
q_{i}r_{j}-q_{j}r_{i}\right) , \\ 
\text{ }r_{i}-r_{j}<\left( r_{i}p_{j}-r_{j}p_{i}\right) +\left(
r_{i}q_{j}-r_{j}q_{i}\right) .%
\end{array}%
\right.  \notag
\end{eqnarray}

The application of Corollary \ref{Global} gives the following proposition

\begin{proposition}
\bigskip The solutions of system (\ref{3.D.C.R.1.1})-(\ref{3.D.C.R.1.3})
with homogenous Neumann boundary
conditions and positive initial data are global.
\end{proposition}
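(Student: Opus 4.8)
The plan is to deduce the result as a direct application of Corollary~\ref{Global}, once the tripled system \eqref{3.D.C.R.1.1}--\eqref{3.D.C.R.1.3} is recast in a form covered by the general theory of Section~3. First I would observe that the three right-hand sides sum to
\[
\left(a_{1}u^{p_{1}}v^{q_{1}}w^{r_{1}}-a_{2}u^{p_{2}}v^{q_{2}}w^{r_{2}}\right)+\left(a_{1}u^{p_{1}}v^{q_{1}}w^{r_{1}}-a_{2}u^{p_{2}}v^{q_{2}}w^{r_{2}}\right)+\left(-a_{1}u^{p_{1}}v^{q_{1}}w^{r_{1}}+a_{2}u^{p_{2}}v^{q_{2}}w^{r_{2}}\right),
\]
which is not zero; however, the first two equations are identical, so $u-v$ solves the heat equation $\partial_{t}(u-v)-$ (a diffusion term) $=0$ only if $d_{1}=d_{2}$. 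I would instead use the conserved combinations coming from the stoichiometry of \eqref{3.D.C.R.1.7}: setting $\phi:=a_{1}u^{p_{1}}v^{q_{1}}w^{r_{1}}-a_{2}u^{p_{2}}v^{q_{2}}w^{r_{2}}$, one has $\partial_{t}u-d_{1}\Delta u=\phi$, $\partial_{t}v-d_{2}\Delta v=\phi$, $\partial_{t}w-d_{3}\Delta w=-\phi$. Pairing the $w$-equation with either the $u$- or the $v$-equation gives a two-component subsystem of exactly the form \eqref{1.1} with reaction $\varphi=\phi$ (and diffusion constants $d_{1},d_{3}$ or $d_{2},d_{3}$), balance law \eqref{M}, and quasi-positivity \eqref{P}: indeed $\phi$ vanishes when any of $u,v,w$ vanishes, in particular when $w=0$, and when $u=0$ (resp.\ $v=0$) provided $p_{i}\ge 1$ (resp.\ $q_{i}\ge 1$), which holds under the standing positivity of the exponents together with the chemical interpretation; more robustly, the nonnegativity of all three components is preserved by the parabolic maximum principle applied componentwise using the sign structure of $\phi$ at the boundary of the positive octant.

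Next I would record that $\phi$ is a polynomial in $(u,v,w)$, hence on the invariant region $\{u,v,w\ge 0\}$ it satisfies a polynomial growth bound of the type \eqref{G} once $w$ is controlled. The key point is that Corollary~\ref{Global} only needs an $L^{\infty}$ (or even $L^{p}$ for large $p$) a~priori bound on the pair to which it is applied; so I would first apply Theorem~1 / Corollary~\ref{Global} to the $(u,w)$-subsystem (treating the $v$-dependence of $\phi$ as a coefficient): since $v\ge 0$ and $\partial_t v-d_2\Delta v=\phi$, the total mass $\int_\Omega (u+v+w)$ and $\int_\Omega(u+w)$, $\int_\Omega(v+w)$ are controlled, giving the $L^1$ bounds that start the Lyapunov functional argument. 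The polynomial Lyapunov functional $L(t)=\int_\Omega H_p(u,w)\,dx$ from \eqref{2.4}--\eqref{2.5}, with the $\theta_i$ chosen by sign of the (eventually fixed-sign) reaction as in the proof of Theorem~1, then yields uniform $L^p$ bounds on $u$ and $w$ for every $p$; by the balance relation among the three equations this feeds back to an $L^p$ bound on $v$ as well, and the standard regularizing-effect argument recalled in Section~2 promotes these to $L^\infty$ bounds, whence $T_{\max}=+\infty$ by Proposition~1.

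The main obstacle I anticipate is exactly the three-component bookkeeping: Corollary~\ref{Global} is stated for the two-by-two system \eqref{1.1}, so one must argue that the tripled system decomposes (or can be estimated pairwise) so that the Lyapunov functional method still closes. Concretely, after bounding $u$ and $w$ one still has $\phi$ depending on $v$, and the growth bound \eqref{G} for the $(u,w)$-reaction only becomes uniform once $v$ is itself bounded; breaking this apparent circularity is the crux. I would resolve it by a bootstrap: use the $(v,w)$-pairing and the $(u,w)$-pairing simultaneously, noting that in both the reaction is $\pm\phi$ with $\phi$ vanishing on $\{w=0\}$, so the same $\theta_i$-construction applies to $H_p(u,w)$ and to $H_p(v,w)$; adding the two functionals produces a single Lyapunov functional whose decay (on $(\overline t,T_{\max})$, after the reaction sign is fixed as in \eqref{1.7.1}--\eqref{1.7.2}) controls $\|u\|_p$, $\|v\|_p$, $\|w\|_p$ jointly, with the cross terms $\nabla u\cdot\nabla w$ and $\nabla v\cdot\nabla w$ handled by the discriminant condition \eqref{2.8} applied to the relevant pairs of diffusion coefficients $(d_1,d_3)$ and $(d_2,d_3)$. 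Once joint $L^p$ bounds for all $p$ are in hand, polynomial growth of $\phi$ gives $\varphi\in L^p$ for $p>n/2$, and global existence follows as in the proof of the Theorem and Corollary~\ref{Global}.
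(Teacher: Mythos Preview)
Your approach is essentially the paper's: apply the two-component result to the $(u,w)$ and $(v,w)$ pairs separately (the paper's proof does exactly this in two lines, without your extra bookkeeping). The circularity you flag---that the polynomial-growth bound on $\phi$ for the $(u,w)$ pair would require $v$ already bounded---is legitimate, and your resolution (run the Lyapunov estimate of Theorem~1 on both pairs first, since that estimate does not use condition~\eqref{G}, then invoke polynomial growth only at the final regularizing step) is correct; the paper simply elides this point.
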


\begin{proof}
We apply Corollary \ref{Global} separately to the following u-w and v-w
systems :%
\begin{equation}
\left\{ 
\begin{array}{l}
\frac{\partial u}{\partial t}-a\Delta
u=a_{1}u^{p_{1}}v^{q_{1}}w^{r_{1}}-a_{2}u^{p_{2}}v^{q_{2}}w^{r_{2}}\text{%
\qquad \qquad in }\mathbb{R}^{+}\times \Omega , \\ 
\frac{\partial w}{\partial t}-c\Delta
w=-a_{1}u^{p_{1}}v^{q_{1}}w^{r_{1}}+a_{2}u^{p_{2}}v^{q_{2}}w^{r_{2}}\text{%
\qquad \qquad in }\mathbb{R}^{+}\times \Omega , \\ 
\frac{\partial u}{\partial \eta }=\frac{\partial w}{\partial \eta }=0\text{%
\qquad \qquad on }\mathbb{R}^{+}\times \partial \Omega , \\ 
u(0,x)=u_{0}(x)\geq 0,\qquad w(0,x)=w_{0}(x)\geq 0,\qquad \text{in}\;\Omega ,%
\end{array}%
\right.  \label{3.14 u-w}
\end{equation}%
and%
\begin{equation}
\left\{ 
\begin{array}{l}
\frac{\partial v}{\partial t}-b\Delta
v=a_{1}u^{p_{1}}v^{q_{1}}w^{r_{1}}-a_{2}u^{p_{2}}v^{q_{2}}w^{r_{2}}\text{%
\qquad \qquad in }\mathbb{R}^{+}\times \Omega , \\ 
\frac{\partial w}{\partial t}-c\Delta
w=-a_{1}u^{p_{1}}v^{q_{1}}w^{r_{1}}+a_{2}u^{p_{2}}v^{q_{2}}w^{r_{2}}\text{%
\qquad \qquad in }\mathbb{R}^{+}\times \Omega , \\ 
\frac{\partial v}{\partial \eta }=\frac{\partial w}{\partial \eta }=0\text{%
\qquad \qquad on }\mathbb{R}^{+}\times \partial \Omega , \\ 
v(0,x)=v_{0}(x)\geq 0,\qquad w(0,x)=w_{0}(x)\geq 0,\qquad \text{in}\;\Omega ,%
\end{array}%
\right.  \label{3.14 v-w}
\end{equation}%
respectively. Each of the two systems (\ref{3.14 u-w}) and (\ref{3.14 v-w})
presents the same situation as system (\ref{3.13}) . Corollary \ref{Global}
is immediately applicable to get separately bounds of $u$ and $w$ then those
of $v$ and $w$ together. Global existence of system (\ref{3.D.C.R.1.1})-(\ref%
{3.D.C.R.1.3}) becomes automatically.
\end{proof}

\subsubsection{General reversible chemical reactions}

Let's consider the following general reversible chemical reaction%
\begin{equation}
\underset{i\in I}{\tsum }n_{i}R_{i}\overset{h}{\underset{k}{\rightleftarrows 
}}\underset{j\in J}{\tsum }n_{j}R_{j}\text{ \ \ with }I\cup J=\left\{
1,...,p\right\} \text{ \ \ and }I\cap J=\left\{ \emptyset \right\} ,
\label{3.2}
\end{equation}%
where $n_{1},n_{2},...,n_{p}$ are respectively numbers of molecules $%
R_{1},R_{2},...,R_{p}$ taking part in the reaction, the constants $h$ and $l$
depend on the temperature, the position $x$ and the time $t$.\newline
The application of the law of conservation of mass and the second law of
Fick (flux) gives the following Super-quadratic reaction-diffusion system%
\begin{equation}
\left\{ 
\begin{array}{c}
n_{k}\dfrac{\partial c_{k}}{\partial t}=\nabla .d_{k}\nabla c_{k}-h\underset{%
i\in I}{\prod }c_{i}^{n_{i}}+l\underset{j\in J}{\prod }c_{j}^{n_{j}},\ \
k\in I, \\ 
n_{k}\dfrac{\partial c_{k}}{\partial t}=\nabla .d_{k}\nabla c_{k}+h\underset{%
i\in I}{\prod }c_{i}^{n_{i}}-l\underset{j\in J}{\prod }c_{j}^{n_{j}},\ \
k\in J,%
\end{array}%
\right.  \label{3.2.1}
\end{equation}%
where $c_{1},c_{2},...,c_{p}$ represent respectively the concentrations of $%
R_{1},R_{2},...,R_{p}$ and $n_{1},n_{2},...,n_{p}$ are positive constants
called orders of $R_{1},R_{2},...,R_{p}$ respectively. The constants $h$ and 
$l$ are positive. According to Corollary \ref{Global} the following result
concerning system (\ref{3.2.1})

\begin{corollary}
Solutions of system (\ref{3.2.1})\ with positive uniformly bounded initial
data, exist for all $t>0$.
\end{corollary}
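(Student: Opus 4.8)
The plan is to reduce system~(\ref{3.2.1}) to a finite family of two–component systems of the type~(\ref{1.1})--(\ref{1.3}), exactly as in the treatment of the tripled reaction~(\ref{3.D.C.R.1.1})--(\ref{3.D.C.R.1.3}), and then to invoke Corollary~\ref{Global} for each of them. By the local existence result stated above there is a unique classical solution $(c_1,\dots,c_{\#(I\cup J)})$ on $[0,T_{\max})\times\Omega$, bounded on every slab $[0,T-\tau]\times\Omega$. First I would record that the full system is quasi–positive: if $c_k=0$ with $k\in I$ the right–hand side of its equation reduces to $l\prod_{j\in J}c_j^{n_j}\ge 0$, and if $k\in J$ it reduces to $h\prod_{i\in I}c_i^{n_i}\ge 0$; hence every $c_k$ remains nonnegative on $[0,T_{\max})$.

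Next I would fix a pair $(k,\ell)$ with $k\in I$, $\ell\in J$, and freeze the remaining concentrations $\bar c_m:=c_m$, $m\neq k,\ell$, which are known, nonnegative, smooth and bounded on each slab. Dividing the $k$–th and $\ell$–th equations of~(\ref{3.2.1}) by $n_k$ and $n_\ell$, the pair $(c_k,c_\ell)$ solves $\partial_t c_k-\frac{d_k}{n_k}\Delta c_k=-\frac1{n_k}\varphi$ and $\partial_t c_\ell-\frac{d_\ell}{n_\ell}\Delta c_\ell=+\frac1{n_\ell}\varphi$ with $\varphi:=h\prod_{i\in I}c_i^{n_i}-l\prod_{j\in J}c_j^{n_j}$, under homogeneous Neumann (or, by the Remark, Robin/Dirichlet) conditions and nonnegative bounded data. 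Here $\varphi\in C^1$ is at most of polynomial growth in $(c_k,c_\ell)$, the frozen factors $\bar c_m^{n_m}$ and the positive constants $h,l$ being absorbed into the constants $C_1,C_2$ of~(\ref{G}). The two distinct time–weights $1/n_k\neq1/n_\ell$ cost nothing: the functional~(\ref{2.4})--(\ref{2.5}) is used verbatim, the only changes being that in the quadratic form under the integral $I$ the diffusivities $a,b$ become $d_k/n_k,\,d_\ell/n_\ell$, so that~(\ref{2.8}) is replaced by $K^2>\dfrac{(d_k/n_k+d_\ell/n_\ell)^2}{4(d_k/n_k)(d_\ell/n_\ell)}$, while the reaction contribution becomes $(\mp\theta_{i+1}/n_k\pm\theta_i/n_\ell)\varphi$, whose sign is again fixed by taking $\{\theta_i\}$ monotone at a suitable rate. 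Corollary~\ref{Global} then yields, for every exponent $p\ge1$, a bound $\int_\Omega(c_k+c_\ell)^p\,dx\le M_{k,\ell,p}$ uniform on $(0,T_{\max})$; letting $(k,\ell)$ range over $I\times J$ (both nonempty) gives $\|c_m\|_{L^p(\Omega)}\le M_{m,p}$ for every species $c_m$ and every $p$.

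Once all components are bounded in every $L^p(\Omega)$, each monomial $\prod_{i\in I}c_i^{n_i}$, $\prod_{j\in J}c_j^{n_j}$ — hence each right–hand side of~(\ref{3.2.1}) — is bounded in $L^q(\Omega)$ for some $q>n/2$; by the regularizing/bootstrap argument recalled in the preliminary remarks this gives $\sup_{t<T_{\max}}\bigl(\sum_m\|c_m(t,\cdot)\|_\infty\bigr)<\infty$, and the blow–up alternative of the local existence result forces $T_{\max}=+\infty$, i.e.\ the solution is global.

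The one genuinely delicate point, on which the whole reduction rests, is the verification that each two–component reduction really meets the hypotheses used in the proof of the Theorem. Since in~(\ref{3.2}) the reactant set $I$ and the product set $J$ are disjoint, the monomials $\prod_{i\in I}c_i^{n_i}$ and $\prod_{j\in J}c_j^{n_j}$ share no variable, so the pair–quasi–positivity $\varphi(t,x,c_k,0)=\varphi(t,x,0,c_\ell)=0$ required by~(\ref{P}) is \emph{not} directly available for the isolated pair $(c_k,c_\ell)$. The route I would take is to revisit the proof of the Theorem and note that what is actually used there is (i) the global positivity of the solution — already established for the full system — to control the mixed powers $c_k^{i}c_\ell^{p-i}$, and (ii) the sign dichotomy~(\ref{1.7.1})--(\ref{1.7.2}) for the scalar function $\varphi$ along the solution; neither of these invokes~(\ref{P}) for the pair. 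Making this decoupling of ``pairwise quasi–positivity'' from ``global positivity'' precise is, I expect, the main obstacle; the rest is the bookkeeping indicated above.
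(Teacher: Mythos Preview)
Your approach is essentially the paper's own: the paper's proof is a single sentence pointing to the general $m\times m$ framework (\ref{3.15 general i,j}) with the partial balance law (\ref{(M)i,j}), which is precisely the ``pair off one species from $I$ with one from $J$ and apply Corollary~\ref{Global}'' reduction you carry out in detail.

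You are in fact more careful than the paper. Two points you raise are genuinely swept under the rug in the paper's one--line argument. First, the stoichiometric weights $1/n_k$, $1/n_\ell$ spoil the exact balance $f+g=0$ of the two--component template; your observation that the reaction term in $J$ becomes $(-\theta_{i+1}/n_k+\theta_i/n_\ell)\varphi$ and that monotonicity of $\{\theta_i\}$ at a suitable rate still forces a sign is the correct fix. Second, and more seriously, you correctly note that because $I\cap J=\emptyset$ the reduced reaction $\varphi$ does \emph{not} satisfy the pairwise quasi--positivity~(\ref{P}): setting $c_k=0$ kills only $\prod_{i\in I}c_i^{n_i}$, not the other monomial. (Contrast this with the tripled system (\ref{3.D.C.R.1.1})--(\ref{3.D.C.R.1.3}), where every species appears in \emph{both} monomials, so (\ref{P}) does hold for each pair --- this is why the tripled case goes through cleanly but the general case does not.) Your proposed remedy --- that in the proof of the Theorem, (\ref{P}) is invoked only to secure nonnegativity of the solution, which here is available from the quasi--positivity of the \emph{full} system --- is the right diagnosis of what the functional argument actually uses. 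The paper does not address either point.
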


\begin{proof}
System (\ref{3.2.1}) satisfies all conditions of system (\ref{3.15 general
i,j}) below, then global existence occurs.
\end{proof}

\section{The general case of reaction diffusion systems}

\bigskip Generally we can apply our results to systems of $m$ reaction
diffusion equations on the form%
\begin{equation}
\left\{ 
\begin{array}{l}
\forall \ j=1,...,m \\ 
\frac{\partial u_{j}}{\partial t}-d_{j}\Delta u_{j}=f_{j}(t,x,u),\text{%
\qquad on }\mathbb{R}^{+}\times \Omega , \\ 
\frac{\partial u_{j}}{\partial \eta }=0,\text{\qquad on }\mathbb{R}%
^{+}\times \partial \Omega , \\ 
u_{j}(0,x)=u_{j0}(x)\geq 0,\ \ \ \text{on }\Omega ,%
\end{array}%
\right.  \label{3.15 general}
\end{equation}%
with nonlinearities at most polynomially growth and satisfying the following
partial strict control of mass 
\begin{equation}
\forall \text{ }i=1,...,m,\ \exists \text{ }j\neq i\text{ such that }%
f_{i}(t,x,u)+f_{j}(t,x,u)=0,\text{ \ \ on }\mathbb{R}^{+}\times \Omega
\times \mathbb{R}^{n}\text{.}  \label{(M)i,j}
\end{equation}%
By applying Corollary \ref{Global} to any pair of equations:%
\begin{equation}
\left\{ 
\begin{array}{l}
\forall \ i,\ j=1,...,m\ \text{with}\ i\neq j \\ 
\begin{array}{l}
\left\{ 
\begin{array}{c}
\frac{\partial u_{i}}{\partial t}-d_{i}\Delta u_{i}=f_{i}(t,x,u),\text{%
\qquad on }\mathbb{R}^{+}\times \Omega , \\ 
\frac{\partial u_{j}}{\partial t}-d_{j}\Delta u_{j}=f_{j}(t,x,u),\text{%
\qquad on }\mathbb{R}^{+}\times \Omega ,%
\end{array}%
\right. \\ 
\frac{\partial u_{i}}{\partial \eta }=\frac{\partial u_{j}}{\partial \eta }%
=0,\text{\qquad on }\mathbb{R}^{+}\times \partial \Omega , \\ 
u_{i}(0,x)=u_{i0}(x)\geq 0,\ \ \ u_{j}(0,x)=u_{j0}(x)\geq 0,\ \ \ \text{on }%
\Omega ,%
\end{array}%
\end{array}%
\right.  \label{3.15 general i,j}
\end{equation}%
with nonlinearities satisfying the partial strict control of mass (\ref%
{(M)i,j}), we can write the following

\begin{proposition}
Positive solutions of the general system of m reaction diffusion equations (%
\ref{3.15 general}) with nonlinearities at most polynomial growth and
satisfying the partial strict control of mass (\ref{(M)i,j}) are global.
\end{proposition}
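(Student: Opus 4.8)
The plan is to reduce the $m$-component system to finitely many two-component subsystems of the form (\ref{1.1})--(\ref{1.3}) and to invoke Corollary \ref{Global} on each of them. First I would fix an arbitrary index $i\in\{1,\dots,m\}$; by the partial strict control of mass (\ref{(M)i,j}) there exists $j=j(i)\neq i$ with $f_{i}(t,x,u)+f_{j}(t,x,u)=0$ on $\mathbb{R}^{+}\times\Omega\times\mathbb{R}^{n}$. The couple $(u_{i},u_{j})$ then solves the system (\ref{3.15 general i,j}). Since the full solution $u=(u_{1},\dots,u_{m})$ is classical on $[0,T_{\max})$, the remaining components $u_{k}$, $k\neq i,j$, may be regarded as prescribed continuous functions of $(t,x)$; substituting them into $f_{i}$ and $f_{j}$ turns (\ref{3.15 general i,j}) into a genuine $2\times 2$ system of the type (\ref{1.1})--(\ref{1.3}), with diffusion constants $d_{i},d_{j}>0$, reaction $\varphi:=-f_{i}=f_{j}\in C^{1}$, balance law (\ref{M}), quasi-positivity (\ref{P}), and at most polynomial growth (\ref{G}) in its last two arguments. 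This substitution is not circular: the solution already exists and is classical on $[0,T_{\max})$, and the bound we are about to obtain is an a priori estimate for that existing solution.

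Next I would apply the Theorem and hence Corollary \ref{Global} to this subsystem. The essential point is that the polynomial Lyapunov functional $L(t)$ of (\ref{2.4})--(\ref{2.5}) built from $(u_{i},u_{j})$ is shown to be bounded using only the sign of the reaction after the last vanishing time $\overline{t}$ together with the balance law and the monotonicity of the sequence $\{\theta_{\ell}\}$ --- no a priori control on the frozen components $u_{k}$ enters that computation. Consequently $u_{i}$ and $u_{j}$ are bounded in $\mathbb{L}^{p}(\Omega)$ for every $p\geq 1$, uniformly on $[0,T_{\max})$. Letting $i$ run over $\{1,\dots,m\}$, every component $u_{1},\dots,u_{m}$ inherits such uniform $\mathbb{L}^{p}$ bounds.

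Finally, with all components uniformly bounded in $\mathbb{L}^{p}(\Omega)$ for all $p$, the polynomial growth of each $f_{i}$ in $u$ yields a uniform bound on $\left\Vert f_{i}(t,\cdot,u(t,\cdot))\right\Vert_{p}$ for some $p>n/2$; the standard $\mathbb{L}^{p}$-regularising effect of the heat semigroup, applied through Duhamel's formula to each equation of (\ref{3.15 general}), then gives a uniform bound on $\left\Vert u_{j}(t,\cdot)\right\Vert_{\infty}$ on $[0,T_{\max})$, for every $j$. The blow-up alternative (2.3) of the local existence Proposition forces $T_{\max}=+\infty$, which is the assertion.

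I expect the delicate step to be the reduction in the first paragraph, namely checking that freezing the other components does not spoil the hypotheses of Corollary \ref{Global}: one must be sure that the reaction $\varphi$ obtained after substitution still falls into one of the two regimes (\ref{1.7.1}) or (\ref{1.7.2}) after a last vanishing time $\overline{t}$ (or never changes sign, in which case global existence is immediate by the maximum principle together with the balance law), and that the polynomial-growth constants for the subsystem may be chosen independently of the frozen components --- which holds in the intended chemical applications since the nonlinearities there are finite sums of monomials with coefficients of a fixed sign after some time. Once this is granted, the proof is a verbatim iteration of the two-variable result over the pairs $(i,j(i))$.
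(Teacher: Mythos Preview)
Your proposal follows exactly the paper's approach: apply Corollary \ref{Global} to each pair $(u_{i},u_{j(i)})$ given by (\ref{(M)i,j}) to obtain uniform $L^{p}$ bounds on every component, and then conclude via the regularising effect. The paper's own proof is the two-line version of what you wrote; your added remarks about freezing the remaining components and the caveat concerning the sign regime after the last vanishing time $\overline{t}$ are exactly the points the paper leaves implicit.
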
\label{General}

\begin{proof}
By applying Corollary \ref{Global} to a pair of equations (\ref{3.15 general
i,j}) with nonlinearities satisfying the strict partial control of mass (\ref%
{(M)i,j}), we obtain bounds of the two components $u_{i}$ and $u_{j}$.
Finally, bounds are obtained of all components of the general system (\ref%
{3.15 general}).
\end{proof}

\begin{remark}
The super-quadratic reaction-diffusion system (\ref{3.2.1}) describing the
general reversible chemical reaction (\ref{3.2}) is a particular case of \
system (\ref{3.15 general}) with nonlinearities at most polynomial growth
and satisfying the strict control of mass by pairs(\ref{(M)i,j}). Global
existence of solutions becomes easy via Corollary \ref{Global}.
\end{remark}

\end{document}